\documentclass[12pt,a4paper]{article}

\usepackage[utf8]{inputenc}
\usepackage[T1]{fontenc}
\usepackage[english]{babel}
\usepackage{lmodern}

\usepackage{amsfonts}
\usepackage{amsmath}
\usepackage{amssymb}
\usepackage{amsthm}

\usepackage{geometry}
\geometry{a4paper, left=25mm, right=25mm, top=30mm, bottom=25mm}

\usepackage{marginnote}

\usepackage{graphicx}
\usepackage{tabularx}
\usepackage{epsfig}
\usepackage{color}
\usepackage{enumerate}
\usepackage{setspace}

\newtheorem{definition}{Definition}[section]
\newtheorem{theorem}[definition]{Theorem}

\newtheorem{proposition}[definition]{Proposition}
\newtheorem{algorithm}[definition]{Algorithm}
\theoremstyle{remark}
\newtheorem{example}[definition]{Example}

\newcommand{\E}{{\mathbb E}}

\newcommand{\R}{{\mathbb R}}

\newcommand{\V}{{\mathbb V}}

\newcommand{\cov}{\mathrm{Cov}}

\begin{document}

\title{Fast Orthogonal transforms for pricing derivatives with quasi-Monte Carlo}

\author{Christian Irrgeher\thanks{Christian Irrgeher, Institut f\"{u}r Finanzmathematik, Universit\"{a}t Linz, Altenbergerstra{\ss}e 69, A-4040 Linz, Austria. e-mail: {\tt christian.irrgeher@jku.at} \quad The author is supported by the Austrian Science Foundation (FWF), Project S9609.}~~and Gunther Leobacher\thanks{Gunther Leobacher, Institut f\"{u}r Finanzmathematik, Universit\"{a}t Linz, Altenbergerstra{\ss}e 69, A-4040 Linz, Austria. e-mail: {\tt gunther.leobacher@jku.at} \quad The author is partially supported by the Austrian Science Foundation (FWF), Project P21196.}}

\date{}

\maketitle

\begin{abstract}
There are a number of situations where, when computing prices of financial derivatives using quasi-Monte Carlo (QMC), it turns out to be beneficial to apply an orthogonal transform to the standard normal input variables. Sometimes those transforms can be computed in time $O(n\log(n))$ for problems depending on $n$ input variables.  Among those are classical methods like the Brownian bridge construction and principal component analysis (PCA) construction for Brownian paths.

Building on preliminary work by Imai \& Tan \cite{imaitan07} as well as Wang \& Sloan \cite{sw10}, where the authors try to find optimal orthogonal transform for given problems, we present how those transforms can be approximated by others that are fast to compute. 

We further present a new regression-based method for finding a Householder reflection which turns out to be very efficient for a wide range of problems. We apply these methods to several very high-dimensional examples from finance.
\end{abstract}


\section{Introduction}\label{sec:intro}

Many simulation problems in finance and other applied fields can be written in the form $\E(f(X))$, where $f$ is a measurable function on $\R^n$ and $X$ is a standard normal vector, that is, $X=(X_1,\ldots,X_n)$ is jointly normal with $\E(X_j)=0$ and $\E(X_jX_k)=\delta_{jk}$. It is a trivial observation of surprisingly big consequences that
\begin{align}\label{eq:fundamental}
\E(f(X))=\E(f(U\!X)) 
\end{align}
for every orthogonal transform $U:\R^n\longrightarrow\R^n$. While this reformulation does not change the simulation problem from the probabilistic point of view, it does sometimes make a big difference when quasi-Monte Carlo simulation is applied to generate the realizations of $X$.

Examples are supplied by the well-known Brownian bridge and PCA constructions of Brownian paths which will be detailed in the following paragraphs. Assume that one wants to know $\E(g(B))$ where $B$ is a Brownian motion with index set $[0,T]$. In most applications this can be reasonably approximated by $\E(\tilde g(B_\frac{T}{n},\ldots,B_\frac{T n}{n}))$, where $\tilde g$ is a function on the set of discrete Brownian paths.

There are three classical methods for sampling from $(B_\frac{T}{n},\ldots,B_\frac{nT}{n})$ given a standard normal vector $X$, namely the forward method, the Brownian bridge construction  and the principal component analysis construction (PCA). All of these constructions may be written in the form $(B_\frac{T}{n},\ldots,B_\frac{nT}{n})=A X$, where $A$ is an $n\times n$ real matrix with 
\begin{align*}
A A^\top=\Sigma:=
\Big(\frac{T}{n}\min(j,k)\Big)_{j,k=1}^n=
\frac{T}{n}\left(
\begin{array}{cccccccc}
1&1&1&\ldots&1\\
1&2&2&\ldots&2\\
1&2&3&\ldots&3\\
\vdots&\vdots&\vdots&\ddots&\vdots\\
1&2&3&\ldots&n
\end{array}
\right)\,.
\end{align*}
For example, the matrix $A$ corresponding to the forward method is 
\begin{align}\label{eq:summation-matrix}
A=S:=
\sqrt{\frac{T}{n}}\left(
\begin{array}{cccccccc}
1&0&\ldots&0\\
1&1&\ldots&0\\
\vdots&\vdots&\ddots&\vdots\\
1&1&\ldots&1
\end{array}
\right)\,,
\end{align}
while PCA corresponds to  $A=V\!D$, where $\Sigma=V\!D^2V^\top$ is the singular value decomposition of\, $\Sigma$. A cor\-res\-ponding decomposition for the Brownian bridge algorithm is given, for example, by Larcher, Leobacher \& Scheicher \cite{lalesch}.

It has been observed by Papageorgiou\cite{papa} that $A A^\top = \Sigma$ if and only if $A=SU$ for some orthogonal matrix $U$, so that every linear construction of $(B_\frac{T}{n},\ldots,B_\frac{nT}{n})$ corresponds to an orthogonal transform of $\R^n$. In that sense the forward method corresponds to the identity, PCA corresponds to $S^{-1}V\!D$ and Brownian bridge corresponds to the inverse Haar transform, see \cite{leo11}.

Thus our original simulation problem can be written, as 
\begin{align*}
\E(\tilde g(B_\frac{T}{n},\ldots,B_\frac{T n}{n}))
=\E(\tilde g(S X))=\E(f(X))
\end{align*}
with $f=\tilde g\circ S$, and we interpret this as using the forward method. Consequently, the same problem using the Brownian bridge takes on the form $\E(f(H^{-1} X))$, where $H$ is the matrix of the inverse Haar transform, and has the form $\E(f(S^{-1}VD X))$, with $S$, $V$, $D$ as above, when PCA is used.

As an application one can generalize the classical constructions of discrete Brownian paths to discrete L\'evy paths. See \cite{leo,lec2008,imaitan09}.
\\

There are some theories as to why an orthogonal transform might have the effect to make the problem more suitable for QMC. Caflisch et al.\ \cite{caflisch97} introduce the concept of effective dimension of a function: consider a function $g:\R^n\longrightarrow \R$ with finite variance w.r.t.\ normal distribution, that is $\E(g(X)^2)<\infty$ where $X=(X_1,\ldots,X_n)$ is a vector of independent standard normal random variables. Then $g$ may be written uniquely as the sum of functions $g_u:\R^n\rightarrow \R$, $u\subseteq\{1,\ldots,n\}$, where $g_u$ depends on the $i$-th coordinate only if $i\in u$ and where $\E(g_u(X))=0$ for all $u\ne\emptyset$ and $\E(g_u(X)g_v(X))=0$ for $u\ne v$, using the so-called ANOVA decomposition of $g$. Furthermore it holds
\begin{align*}
\V(g(X))=\sum_{\emptyset\ne u\subseteq\{1,\ldots,n\}}\V(g_u(X))\,.
\end{align*}
The effective dimension in the truncation sense at level $\alpha\in (0,1)$ is then the smallest integer $k$ such that
\begin{align*}
\V(g(X))(1-\alpha)<\sum_{\emptyset\ne u\subseteq\{1,\ldots,k\}}\V(g_u(X))\,,
\end{align*}
see \cite{caflisch97}. Typically $\alpha$ is chosen as $0.01$. Therefore, a function with effective dimension $k$ is one that, in this sense, almost exclusively depends on the first $k$ variables and which therefore is more suitable for QMC. This is confirmed by empirical evidence. Building on the concept of effective dimension of a function, Owen \cite{owen2012} gives definitions of effective dimensions of function spaces, thus connecting the concepts of effective dimension with that of tractability.  

Now one can turn this around and try to put as much variance as possible to the first few coordinates, by concatenating $g$ with a suitable orthogonal transform. This is what has been done by Imai \& Tan~\cite{imaitan07} and what we will do here, using a different approach. We shall see in Section~\ref{sec:numeric} that empirical evidence also supports the conjectured efficiency of our method.

However, there is also a disadvantage of that approach: the computation of the orthogonal transform has a cost, which is in general of the order $O(n^2)$. For large $n$ this cost is likely to swallow the potential gains from the transform. We therefore concentrate on orthogonal transforms which have cost of the order $O(n\log(n))$.

Examples include discrete sine and cosine transform, Walsh and Haar transform as well as the orthogonal matrix corresponding to the PCA, see \cite{schei,leo11}.
\\

Imai \& Tan \cite{imaitan07} propose an algorithm to find a good orthogonal transform in the sense that it puts as much variance as possible to the first few dimensions. They propose to take the first order Taylor expansion at some point $\widetilde X$, i.e.\
\begin{align*}
g(X)\approx g(\widetilde X)+\sum_{i=1}^{n} \frac{\partial g(X)}{\partial X_i}\vert_{X=\widetilde{X}}(X_i-\widetilde X_i).
\end{align*}
Then the contribution of the $i$-th component of $X$  to $\V(g(X))$ is given by $(\frac{\partial g(X)}{\partial X_i}\vert_{X=\widetilde{X}})^2$. The columns of the orthogonal transform are chosen by solving optimization problems of the form
\begin{align*}
&A^*_{\cdot i}=\max_{A_{\cdot i}\in\R^n}\Bigl(\frac{\partial g(AX)}{\partial X_i}\vert_{X=\widetilde{X}_i}\Bigr)^2\\
\textnormal{with}~&||A_{\cdot i}||=1~\textnormal{and}~A_{\cdot j}^{\top}A_{\cdot i}=0,~j=1,\dots,i-1
\end{align*}
with $\widetilde{X}_i=(\widetilde{X}^{1}_i,\dots,\widetilde{X}^{i}_i,0,\dots,0)$. They suggest to perform this optimization only for the first few columns of the matrix $A$. In this paper we improve on their algorithm in various directions.  In particular  we  find a good orthogonal transform that is fast in that it can be computed even in linear time. \\

The remainder of the paper is organized as follows. Section \ref{sec:householder} reviews basic properties of Householder reflections and shows how they can be used to find fast versions of orthogonal transforms which put most variance on the first $k$ variables. The main part of our article, Section \ref{sec:algo}, describes algorithms for finding fast orthogonal transforms using again Householder reflections. In contrast to the method of Imai \& Tan \cite{imaitan07} we do not rely on differentiability. This makes the algorithm useful for barrier-type options. We further provide some theoretical results which indicate why the method serves to reduce the effective dimension. 

Section~\ref{sec:numeric} gives some numerical examples where the methods described earlier are applied to examples from finance. We will see that the new methods described in Section~\ref{sec:algo} are among the best, both  with regard to speed and accuracy.

We provide an appendix where we compute certain expectations depending on the maximum of a Brownian path. This is useful for some of the numerical examples.

\section{Householder reflections}\label{sec:householder}

We recall the definition and basic properties of Householder reflections from Golub \& van Loan \cite{golub96}.
\begin{definition}
A matrix of the form
\begin{align*}
U =I-2 \frac{v v^\top}{v^\top\!v} \,,
\end{align*}
where $v\in\R^n$, is called a {\em Householder reflection}. The vector $v$ is called the defining {\em Householder vector}.
\end{definition}

In the following proposition, $e_1$ denotes the first canonical basis vector in $\R^n$, $e_1=(1,0,\ldots,0)$.

\begin{proposition}
A Householder reflection have the following properties:
\begin{enumerate}
\item If $x\in\R^n$ is a vector then $Ux$ is the reflection of $x$ in the hyperplane $\mathrm{span}\{v\}^\perp$. In particular, $U$ is orthogonal and symmetric, i.e.  $U^{-1}=U$.
\item Given any vector $a\in\R^n$ we can find $v\in\R^n$ such that for the corresponding Householder reflection $U$ we have $Ua=\|a\|e_1$. The computation of the Householder vector uses $3n$ floating point operations.
\item The computation of the matrix-vector multiplication $Ux$ uses at most $4n$ floating point operations.
\end{enumerate}
\end{proposition}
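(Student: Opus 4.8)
The plan is to handle the three assertions in turn, each reducing to a short computation that exploits the rank-one structure of $U = I - 2\frac{vv^\top}{v^\top v}$.

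For the first assertion I would decompose an arbitrary $x \in \R^n$ into its component along $v$ and its component in the hyperplane, writing $x = \frac{v^\top x}{v^\top v}v + \big(x - \frac{v^\top x}{v^\top v}v\big)$, and then observe that $Ux = x - 2\frac{vv^\top}{v^\top v}x$ negates precisely the first (parallel) component while fixing the orthogonal one, which is the definition of reflection in $\mathrm{span}\{v\}^\perp$. Symmetry $U^\top = U$ is immediate because $vv^\top$ is symmetric. For orthogonality I would expand $U^2 = \big(I - 2\frac{vv^\top}{v^\top v}\big)^2$ and use that $v^\top v$ is a scalar to simplify $vv^\top vv^\top = (v^\top v)\,vv^\top$; the two correction terms cancel and leave $U^2 = I$, so $U^{-1} = U = U^\top$ and $U$ is orthogonal.

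For the second assertion I would exhibit the Householder vector explicitly as $v = a - \|a\|e_1$ (with $U = I$ handling the degenerate case $a \in \R_{\geq 0}e_1$). A direct computation gives $v^\top a = \|a\|^2 - \|a\|a_1$ and $v^\top v = 2\big(\|a\|^2 - \|a\|a_1\big)$, hence $2\frac{v^\top a}{v^\top v} = 1$ and therefore $Ua = a - v = \|a\|e_1$. The operation count is then bookkeeping with linear leading term: forming $\|a\|$ via the inner product $a^\top a$ costs about $2n$ flops plus a square root, and normalizing $v$ (for instance so that its first entry equals one, as in Golub \& van Loan) costs a further $\approx n$ divisions, giving the stated $3n$.

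The third assertion is where the efficiency of the whole method resides, and the key point is never to form the dense $n \times n$ matrix $U$ at all. Instead I would write $Ux = x - \beta\,(v^\top x)\,v$ with the scalar $\beta = 2/(v^\top v)$ precomputed once, and count: the inner product $v^\top x$ costs $n$ multiplications and $n-1$ additions, scaling by $\beta$ costs one multiplication, and the rank-one update $x \mapsto x - \big(\beta\,v^\top x\big)v$ costs a further $n$ multiplications and $n$ subtractions, for a total of exactly $4n$ floating point operations. The arguments are all elementary, so there is no single hard step; the only places demanding care are the degenerate case $a \in \R_{\geq 0}e_1$ together with the sign convention (choosing $v = a + \mathrm{sign}(a_1)\|a\|e_1$ is numerically preferable but sends $a$ to $-\mathrm{sign}(a_1)\|a\|e_1$), and matching the flop counts to the stated constants, which hinges on precomputing $v^\top v$ and using the rank-one structure rather than a dense multiply.
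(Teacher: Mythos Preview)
Your proposal is correct and complete: each of the three parts is handled by the standard rank-one computations, the degenerate case and sign convention are noted, and the flop counts match the stated constants. The paper itself does not give any argument at all but simply refers the reader to Chapter~5.1 of Golub \& van Loan~\cite{golub96}; your write-up is exactly the elementary derivation one finds there, so in substance you have reproduced the cited proof rather than taken a different route.
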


\begin{proof}
See Chapter 5.1 of Golub \& van Loan \cite{golub96}.
\end{proof}

Our main application of Householder reflections is the following: suppose we know that for a given integration problem $\E(f(X))$ some orthogonal transform $\hat U$ reduces the effective dimension in the truncation sense to $k$, that is, almost all of the variance of $f(\hat U X)$ is captured by $X_1,\ldots,X_k$, $k\ll n$.

Let $\hat U=(\hat u_1,\ldots,\hat u_n)$, that is, $\hat u_j$ is the $j$-th column of $\hat U$. We can find Householder reflections $U_1,\ldots,U_k$ such that $U_1\ldots U_{k} e_\ell=\hat u_\ell$, $\ell=1,\ldots,k$ as follows: 
\begin{itemize}
\item Let $U_1$ be a Householder reflection that maps $e_1$ to $\hat u_1$. $U_1$ also maps $\hat u_1$ to $e_1$. Since the vectors $\hat u_j$ are orthogonal we have $e_1^\top\!(U_1 \hat u_2)=(U_1 \hat u_1)^\top\!(U_1 \hat u_2)=\hat u_1^\top\!\hat u_2=0$.
\item Therefore there exists a Householder reflection $U_2$ operating on the last $n-1$ coordinates which maps $e_2$ to $U_1\hat u_2$. Thus $U_1U_2 e_1=U_1e_1=\hat u_1$, $U_1U_2 e_2=U_1U_1\hat u_2=\hat u_2$. 
\item Suppose Householder reflections $U_1,\ldots,U_j$ have been constructed such that $U_1\ldots U_j e_\ell=\hat u_\ell$, $\ell=1,\ldots,j$.
\item  Then there exists a Householder reflection $U_{j+1}$ operating on the last $n-j$ coordinates which maps $e_{j+1}$ to $U_j\ldots U_1 \hat u_{j+1}$. Then $U_1\ldots U_{j+1} e_\ell=\hat u_\ell$, $\ell=1,\ldots,{j+1}$.
\end{itemize}

Write $U=U_1\ldots U_{k}$. By construction the first $k$ columns of $U$ coincide with those of $\hat U$. Since, by assumption, $X_1,\ldots,X_k$ capture almost all of the variance of $f(\hat U X)$, the same is true for $f(U X)$. But for small $k$ the computational cost for computing $U X$ is of the order $nk$, as compared to general matrix-vector multiplication which occurs a cost of order $O(n^2)$.\\

Imai \& Tan \cite{imaitan07} and Wang \& Sloan \cite{sw10} give examples for which they find good orthogonal transforms $\hat U$ that reduce the effective dimension. However they do not specify how those transforms are applied. We propose to approximate them using the above method. 

However, the main topic of this paper is to present transforms that use only one Householder reflection. This will by detailed in the next section.

\section{Regression algorithm}\label{sec:algo}

Let $f:\R^n\longrightarrow \R$ be a measurable function with $\E(f(X)^2)<\infty$ for a standard normal vector $X$.

We want to approximate $f$ by a linear function:
\begin{align*}
f(x)\approx a^\top x+b 
\end{align*}
where $a\in\R^n$ and $b\in\R$. This can be done in different ways. For example, Imai \& Tan \cite{imaitan07} take the first order Taylor expansion of $f$. 

In contrast, we take a ``linear regression'' approach, i.e.\ we  minimize
\begin{align}\label{eq:lin-crit}
\E\left((f(X)-a^\top X-b)^2\right)\rightarrow \min\,.
\end{align}
First order conditions give
\begin{align*}
a_j=\E(f(X)X_j),\quad j=1,\ldots,n\quad\textnormal{and}\quad b=\E(f(X))\,.
\end{align*}
Therefore, \eqref{eq:lin-crit} minimizes the variance of the difference between $f$ and the linear approximation. So
\begin{align*}
\V(f(X))
&=\E\left((f(X)-b)^2\right)\\
&=\E\left((a^\top X)^2+(f(X)-b-a^\top X)^2\right)\\
&=\|a\|^2+\V\left(f(X)-a^\top X\right)\,.
\end{align*}
That is, $\|a\|^2\!/\V(f(X))$ measures the proportion of variance captured by the linear approximation. Now there exists a unique Householder reflection $U$ that maps $e_1$ to $a/\|a\|$. With this transform we have 
$a^\top U X=\|a\|e_1 ^\top X=\|a\| X_1$ and therefore  
\begin{align*}
\E(f(X))=\E(f(UX))&=\E\left(a^\top U X +\left(f(UX)-a^\top U X\right)\right)\\
&=\E\left(\|a\| X_1 +\left(f(UX)-\|a\|X_1\right)\right)\,.
\end{align*}
Therefore the linear part of the integration problem depends on the parameter $X_1$ alone. Now, if the linear part constitutes a large part of the integration problem then we have succeeded in putting a large fraction of the variance into the first coordinate by composing $f$ with $U$.

\begin{algorithm}
Let $X_1,\ldots,X_n$ be independent standard normal variables.
Let $f$ be a function $f:\R^n\longrightarrow \R$.
\begin{enumerate}
\item $a_j:=\E(X_j f(X))$ for $j=1,\ldots,n$;
\item if $\|a\|=0$ define $U=I$ and go to 4.;
\item else let $U$ be a Householder reflection that maps 
$e_1$ to $a/\|a\|$;
\item Compute $\E(f(UX))$ using QMC.
\end{enumerate}
\end{algorithm}

A drawback of the algorithm is that in general the computation of the expectations in step 1 is no easier than the original problem. In some cases the expectation can be computed explicitly, though usually in that case also the original problem has an explicit solution.

\begin{example}\label{ex:sum_prod}
$f(X) 
=\sum_{k=1}^m w_{k}  \exp\left(\sum_{j=1}^n (c_{k,j}X_j + d_{k,j})\right)$. It is easily verified that, with $\phi$ denoting the standard normal density, $\phi(x)=\exp(-\frac{x^2}{2})/\sqrt{2\pi}$,
\begin{align*}
\int_\R  \exp(cx+d) \phi(x) dx&=\exp\left(c^2/2+d\right)\,,\\
\int_\R x\, \exp(cx+d) \phi(x) dx&=c \exp\left(c^2/2+d\right)\,.
\end{align*}
Therefore it holds that 
\begin{align*}
a_i=\E(f(X)X_i)&=\int_{-\infty}^\infty f(x)x_i\phi(x_1)\ldots\phi(x_n)dx_1\ldots dx_n\\
&=\sum_{k=1}^m c_{k,i}w_{k} \exp\left(\sum_{j=1}^n \left(c_{k,j}^2/2 + d_{k,j}\right)\right)\,.
\end{align*}
Let us find out how much of the variance of $f(UX)$ is captured by $\|a\|X_1$:

We write $\bar w_k:=w_{k} \exp(\sum_{j=1}^n (c_{k,j}^2/2 + d_{k,j}))$. Then 
\begin{align}
\|a\|^2&=\sum_{i=1}^n\left(\sum_{k=1}^m\bar w_k c_{k,i}\right)^2\nonumber\\
&=\sum_{k_1=1}^m\sum_{k_2=1}^m\bar w_{k_1}\bar w_{k_2} \sum_{i=1}^nc_{k_1,i}c_{k_2,i}\nonumber\\
&=\sum_{k_1=1}^m\sum_{k_2=1}^m\bar w_{k_1}\bar w_{k_2} \bar c_{k_1,k_2}\,,\label{eq:asian-a}
\end{align}
where $\bar c_{k_1,k_2}:=\sum_{i=1}^nc_{k_1,i}c_{k_2,i}$.

On the other hand, it is easy to see that $\E(f(X))=\sum_{k=1}^m \bar w_k$ and $\E(f(X)^2)=\sum_{k_1=1}^m\sum_{k_2=1}^m\bar w_{k_1}\bar w_{k_2} e^{\bar c_{k_1,k_2}}$. Therefore we get for the variance of $f(UX)$
\begin{align}
\V(f(UX))&=\V(f(X))=\E(f(X)^2)-\E(f(X))^2\nonumber \\
&=\sum_{k_1=1}^m\sum_{k_2=1}^m\bar w_{k_1}\bar w_{k_2}(e^{\bar c_{k_1,k_2}}-1)\,.\label{eq:asian-v-a}
\end{align}
Let us try some special values that are related to Asian options: 
\begin{align*}
m=n,\; w_k=\frac{1}{n},\;c_{k,j}=\sigma\sqrt{\Delta t}1_{j\le k},\;d_{k,j}=\left(r-\frac{\sigma^2}{2}\right){\Delta t}1_{j\le k}
\end{align*}
with $r,\sigma,T>0$, $\Delta t=\frac{T}{n}$. For this choice we get $\bar w_k=\frac{1}{n}e^{rT k/n}$, and $\bar c_{k_1,k_2}=\sigma^2 T \frac{\min(k_1,k_2)}{n}$. 

For large $n$ the sums in equations \eqref{eq:asian-a} and \eqref{eq:asian-v-a} can be approximated by corresponding integrals such that
\begin{align*}
\|a\|^2&\approx \sigma^2 T \int_0^1\int_0^1 e^{rTx}e^{rTy}\min(x,y)dxdy\\ 
&=\sigma^2  \frac{4 e^{r T}  + 2 e^{2 r T} r T- (3 e^{2 r T}+1)}{2 r^3 T^2}\\
\V(f(X))^2&\approx \int_0^1\int_0^1 e^{rTx}e^{rTy}(e^{\sigma^2 T \min(x,y)}-1)dxdy\\ 
&=\frac{2e^{rT}\left(2r\sigma^2+\sigma^4\right)+2e^{T(2r+\sigma^2)}r^2-\left(e^{2rT}\left(2r^2+3r\sigma^2+\sigma^4\right)+r\sigma^2+\sigma^4\right)}
{r^2 T^2(r+\sigma^2)(2r+\sigma^2)}
\end{align*}
Table \ref{tbl:var-part} shows the fraction $\frac{\V(f(X))-\|a\|^2}{\V(f(X))}$ for a few values of $r$, $\sigma$ and $T=1$.

\begin{table}[ht]
\begin{center}
\begin{tabular}{c{|}cccccccc}
\hline
$r\backslash \sigma^2$&0.01&0.02&0.03&0.04\\\hline
0.1&0.0025& 0.0051& 0.0076& 0.0101\\
0.2&0.0026&   0.0051& 0.0077& 0.0103\\
0.3&0.0026& 0.0052& 0.0078& 0.0104\\
\hline
\end{tabular}
\caption{$\frac{\V(f(X))-\|a\|^2}{\V(f(X))}$ for $T=1$ and different values for 
$r$, $\sigma^2$.}\label{tbl:var-part}
\end{center}
\end{table}

It can be concluded that in this example almost all of the variance of $f(UX)$ is captured by $X_1$.
\qed
\end{example}

In general we cannot expect that $\E(f(X)X_i)$ can be computed explicitly. Of course it is an option to compute $\E(f(X)X_i)$ using (quasi-)Monte Carlo, though it is unlikely that this will lead to small overall computing times. But quite frequently, especially in financial applications,  a problem can be written in the form, $f(X)=g(h(X))$, where $\E(h(X)X_i)$ can be computed and $h$ is some relatively simple function $h:\R\longrightarrow\R$.

\begin{algorithm}\label{alg:main}
Let $X_1,\ldots,X_n$ be independent standard normal variables. Let $f$ be a function $f:\R^n\longrightarrow \R$, which is of the form $f=g\circ h$ where $h:\R^n\longrightarrow \R$ and $g:\R\longrightarrow \R$.
\begin{enumerate}
\item $a_j:=\E(X_j h(X))$ for $j=1,\ldots,n$;
\item if $\|a\|=0$ define $U=I$ and go to 4.;
\item else let $U$ be a Householder reflection that maps 
$e_1$ to $a/\|a\|$;
\item Compute $\E(f(UX))$ using QMC.
\end{enumerate}
\end{algorithm}

Without additional assumptions on the functions $h$ and $g$ there is no guarantee that $U$ gives better convergence. Nevertheless there are practical examples where this algorithm gives excellent results.

\begin{example}\label{ex:asianoption}
Consider an arithmetic average value option written on some underlying $S$,
\[
f(X)=e^{-r T}\max\left(\frac{1}{n}\sum_{k=1}^n S_{\frac{k}{n}T}(X)-K,0\right)\,,
\]
and
\[
S_{\frac{k}{n}T}(X)=S_0\exp\left(\sum_{j=1}^k\sigma\sqrt{\frac{T}{n}}X_j+\left(r-\frac{\sigma^2}{2}\right)\frac{k}{n}T\right)\,.
\]
Here we have $f(X)=g(h(X))$, where $g(s)=e^{-r T}\max(s-K,0)$ and $h$ is like in Example \ref{ex:sum_prod} with $m=n$, $w_{k}=\frac{1}{n}S_0$, $c_{k,j}=\sqrt{\frac{T}{n}}\sigma 1_{j\le k}$, $d_{k,j}=\frac{T}{n}(r-\frac{\sigma^2}{2})1_{j\le k}$\,.\qed\\
\end{example}

%


\vspace{1em}
Write $Y:=\|a\|X_1$, $Z:=h(UX)-\|a\|X_1$. Then $Y,Z$ are uncorrelated, 
\begin{align*}
\E(YZ)=&\E(h(UX)\|a\|e_1^\top X)-\|a\|^2
=\E(h(UX)\|a\|(Ue_1)^\top U X)-\|a\|^2\\
=&\E(h(UX)a^\top U X)-\|a\|^2
=a^\top \E(h(UX)U X)-\|a\|^2\\
=&a^\top \E(h(X)X)-\|a\|^2
=a^\top a-\|a\|^2=0\,.
\end{align*}
Further, $\E(Y)=0$, such that $\E(Y)\E(Z)=0$ as well, and therefore $\cov(Y,Z)=0$.

\begin{theorem}\label{th:decomp}
Let $f,g,h,U,X_1,\ldots, X_n$ be like in Algorithm \ref{alg:main}. Write again $Y:=\|a\|X_1$, $Z:=h(UX)-\|a\|X_1$.

Then $\V(f(UX))=\V(\E(g(Y+Z)|Y))+\V(g(Y+Z)-\E(g(Y+Z)|Y))$.
\end{theorem}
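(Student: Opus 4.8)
The plan is to recognize that this is an instance of the law of total variance and to establish it through an orthogonality argument. First I would note that by the very definitions of $Y$ and $Z$ we have $Y+Z=h(UX)$, so that $f(UX)=g(h(UX))=g(Y+Z)$. Writing $W:=g(Y+Z)=f(UX)$ and $M:=\E(W\mid Y)$, the claimed identity reads $\V(W)=\V(M)+\V(W-M)$, which I would prove by showing that $W$ decomposes as the sum $W=M+(W-M)$ of two uncorrelated random variables, so that the variance is additive over the two summands.

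The key step is to verify $\cov(M,W-M)=0$ together with $\E(W-M)=0$. Since $M=\E(W\mid Y)$ is $\sigma(Y)$-measurable, the tower property gives $\E(W-M\mid Y)=\E(W\mid Y)-M=0$, and taking expectations yields $\E(W-M)=0$. For the covariance, again using that $M$ is $\sigma(Y)$-measurable, I would compute $\E\bigl(M(W-M)\bigr)=\E\bigl(M\,\E(W-M\mid Y)\bigr)=0$, whence $\cov(M,W-M)=\E(M(W-M))-\E(M)\E(W-M)=0$. Additivity of variance over uncorrelated summands then gives $\V(W)=\V(M)+\V(W-M)$, which is exactly $\V(f(UX))=\V(\E(g(Y+Z)\mid Y))+\V(g(Y+Z)-\E(g(Y+Z)\mid Y))$.

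There is no real obstacle here: the statement is the standard orthogonal (ANOVA-type) decomposition of $W$ with respect to the conditioning variable $Y$, and the only thing specific to the present setup is the trivial observation $h(UX)=Y+Z$; the uncorrelatedness of $Y$ and $Z$ established just before the theorem is not actually needed for this decomposition. The one point requiring a word of care is integrability: the hypothesis $\E(f(X)^2)<\infty$ from Algorithm \ref{alg:main} ensures $\E(W^2)=\E(f(UX)^2)=\E(f(X)^2)<\infty$ (by \eqref{eq:fundamental} applied to $f^2$), so that all conditional expectations and variances appearing above are well defined and finite.
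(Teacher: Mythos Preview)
Your proof is correct and follows essentially the same approach as the paper: both reduce the claim to showing that $M=\E(g(Y+Z)\mid Y)$ and $g(Y+Z)-M$ are uncorrelated (with the latter having mean zero), and both verify this via the tower property. Your version is a bit more streamlined, and your remarks that the uncorrelatedness of $Y$ and $Z$ is not needed here and that integrability follows from \eqref{eq:fundamental} are welcome clarifications absent from the paper's proof.
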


\begin{proof}
We write $\bar Y=\E(g(Y+Z)|Y)$ and $\bar Z=g(Y+Z)-\E(g(Y+Z)|Y)$, so that we have to show $ V(\bar Y+\bar Z)=\V(\bar Y)+\V(\bar Z) $. To that end it is sufficient to prove that $\bar Y$ and $\bar Z$ are uncorrelated:
\begin{align*}
\E(\bar Y\bar Z)
=&\E(\E(g(Y+Z)|Y)g(Y+Z))-\E(\E(g(Y+Z)|Y)\E(g(Y+Z)|Y))\\
=&\E(\E(\E(g(Y+Z)|Y)g(Y+Z)|Y))-\E(\E(g(Y+Z)|Y)^2)\\
=&\E(\E(g(Y+Z)|Y)\E(g(Y+Z)|Y))-\E(\E(g(Y+Z)|Y)^2)=0\,.
\end{align*}
Since $\E(\bar Z)=0$, we have $\E(\bar Y)\E(\bar Z)=0=\E(\bar Y\bar Z)$.
\end{proof}

We consider a special case that will rarely occur in practice but which gives a flavor of the best result possible. Assume that $g$ is Lipschitz continuous with constant $L$. Suppose further that $Y$ and $Z$ are not only uncorrelated, but even independent. 

Denote by $F_Y$, $F_Z$ the cumulative probability distribution functions of $Y$ and $Z$, respectively. Using independence we get
\begin{align*}
\E(g(Y+Z)|Y)=\int_\R g(Y+\zeta) dF_Z(\zeta)\,.
\end{align*}
Noting that $\E(g(Y+Z)-\E(g(Y+Z)|Y))=0$ we thus get
\begin{align*}
\V(g(Y+Z)-\E(g(Y+Z)|Y))
&=\E\left((g(Y+Z)-\E(g(Y+Z)|Y))^2\right)\\
&=\E\left(\left(\int_\R \big(g(Y+Z)-g(Y+\zeta)\big)dF_Z(\zeta)\right)^2\right)\\
&\le\E\left(\int_\R \left(g(Y+Z)-g(Y+\zeta)\right)^2dF_Z(\zeta)\right)\\
&\le\E\left(L^2\int_\R \left(Z-\zeta\right)^2dF_Z(\zeta)\right)\\
&\le L^2\E\left(Z^2-2Z\E(Z)+\E(Z^2)\right)=2 L^2\V(Z)\,,
\end{align*}
where we also have used the Cauchy-Schwarz inequality. Thus with Theorem \ref{th:decomp} we get
\begin{align*}
\V(f(UX))-\V(\E(g(Y+Z)|Y))&\leq 2L^2\V(Z)
\end{align*}
that is,
\begin{align*}
\V(f(UX))-\V(\E(f(UX)|X_1))&\leq 2L^2(\V(h(UX))-\|a\|^2).
\end{align*}
So in this situation, if $X_1$ captures a large fraction of the variance of $h(UX)$, then $X_1$ also captures a large fraction of the variance of $f(UX)$ provided that the Lipschitz constant $L$ is not too big. 

We can also think of a variant of Algorithm \ref{alg:main} for slightly more complicated functions. We have been inspired by Wang \& Sloan \cite{sw10}, where the authors consider functions of the form $f(X)=g(w_1^\top X,\ldots,w_m^\top X)$ and show, that there is an orthogonal transform that makes this function $m$-dimensional. We give a slightly modified version of their argument which guarantees that the orthogonal transform is also fast to compute for small $m$, that is for $m\le\log(n)$.

Let $f(X)=g(w_1^\top X,\ldots,w_m^\top X)$ for $w_1,\ldots,w_m\in\R^n$. We may assume that $w_1$ is not the zero vector. Let $U_1$ be a Householder reflection which maps $(1,0,\ldots,0)$ to $\frac{w_1}{\|w_1\|}$. Then $w_1^\top U_1 X=\|w_1\| (1,0,\ldots,0)^\top X=\|w_1\| X_1$ and therefore 
\begin{align*}
f(U_1 X)=g(\|w_1\| X_1,(U_1w_2)^\top  X,\ldots,(U_1 w_m)^\top X)\,.
\end{align*}
Next we write $(U_1 w_k)^\top X=(U_1 w_k)^\top_1 X_1+(U_1w_k)^\top_{2\ldots n} X_{2\ldots n}$. That is, 
\begin{align*}
f(U_1X)=\bar g(X_1,\bar w_2^\top X_{2\ldots n},\ldots,\bar w_m^\top X_{2\ldots n})
\end{align*}
for some $\bar w_2,\ldots,\bar w_m\in\R^{n-1}$. Assuming that $\bar w_2\ne 0$, let $\bar U_2$ be the Householder reflection of $\R^{n-1}$ that maps $(1,0,\ldots,0)$ to $\bar w_2/\|\bar w_2\|$ and let
\begin{align*}
U_2=\left(
\begin{array}{ccc}
1&0\\
0&\bar U_2
\end{array}
\right)
\,.
\end{align*}
Then $U_2$ is a Householder reflection of $\R^n$ and
\begin{align*}
f(U_1U_2X)=\bar{\bar g}(X_1,X_2,\bar{\bar w}_3^{\;\top} X_{3\ldots n},\ldots,\bar{\bar w}_m^{\;\top} X_{3\ldots n})\,.
\end{align*}
for some $\bar{\bar w}_3,\ldots,\bar{\bar w}_n\in\R^{n-2}$. Proceeding that way one arrives at 
\begin{align*}
f(U_1\cdots U_{\hat m} X)=\hat g(X_1,X_2,\ldots,X_{\hat m})\,
\end{align*}
for some $\hat m\le m$ (We may have $\hat m<m$ if at some stage all remaining $w_k$ are
zero).\\

We propose a similar procedure for an integration problem of the form $f(X)=g(h_1(X),h_2(X),\ldots,h_m(X))$ where $\E(h_j(X)X_k)$ can be computed explicitly (or at least efficiently).

\begin{algorithm}\label{alg:general}
Let $X_1,\ldots,X_n$ be independent standard normal variables. Let $f$ be a function $f:\R^n\longrightarrow \R$, which is of the form $f=g\circ h$ where $h:\R^n\longrightarrow \R^m$ and $g:\R^m\longrightarrow \R$.
\begin{enumerate}
\item Start with $k=1$ and $U=I$;
\item\label{it:goto} $a^{(k)}_j:=\E(X_j h_k(UX))$ for $j=k,\ldots,n$;
\item $a^{(k)}_j:=0$ for $j=1,\ldots,k-1$;
\item if $\|a^{(k)}\|=0$ define $U^{(k)}=I$ and go to \ref{it:final};
\item else let $U^{(k)}$ be a Householder reflection that maps 
$e_k$ to $a^{(k)}/\|a^{(k)}\|$;
\item $U=U U^{(k)}$;
\item $k=k+1$;
\item while $k\leq m$, go back to \ref{it:goto};
\item\label{it:final} Compute $\E(f(UX))$ using QMC.
\end{enumerate}
\end{algorithm}

We will give a numerical example in Section \ref{sec:numeric}.

\section{Numerical tests}\label{sec:numeric}

In this section we will apply our method to examples from mathematical finance.

\subsection*{Asian option}

The first numerical example we give is the evaluation of an Asian call option with discrete arithmetic average in the Black-Scholes model, which has been discussed previously. Since the payoff function $f$ is of the form $g\circ h$ with $g$ and $h$ as in Example \ref{ex:asianoption}, we apply Algorithm \ref{alg:main} to the integration problem $E(f(X))$ where the vector $a$ follows from Example \ref{ex:sum_prod}, i.e. for every $i=1,\ldots,n$
\begin{align*}
a_i=\frac{1}{n}\sum_{k=i}^{n}\sigma\sqrt{\frac{T}{n}}\;e^{rkT/n}.
\end{align*}
For the quasi-Monte Carlo simulation we use a Sobol sequence of dimension $n=250$ with a random shift and we have $S_0=100, K=100, r=0.04, \sigma=0.2$ as well as $T=1$. We compute the standard deviation based on $32$ batches for $N$ sample paths, where the number of sample paths ranges from $2^1$ to $2^{14}$. Note that the standard deviation is different from the RQMC standard deviation defined in L'Ecuyer \& Munger \cite{lecmcqmc}.

In Figure \ref{fig:asian} we compare the regression method with the forward method, the PCA construction and the LT method of Imai and Tan. We see that PCA, LT and the Regression method yield similar results, but all of the three outperform the forward method. Note that the regression method can be applied in $O(n)$. Thus we can achieve the efficiency of the PCA with the regression method with lower computational costs. Moreover, it is interesting that the LT method and regression method yield nearly the same results.

\begin{figure}[ht]
\begin{center}
\begin{minipage}[b]{7.9 cm}
\includegraphics[width=7.9cm,height=5.5cm]{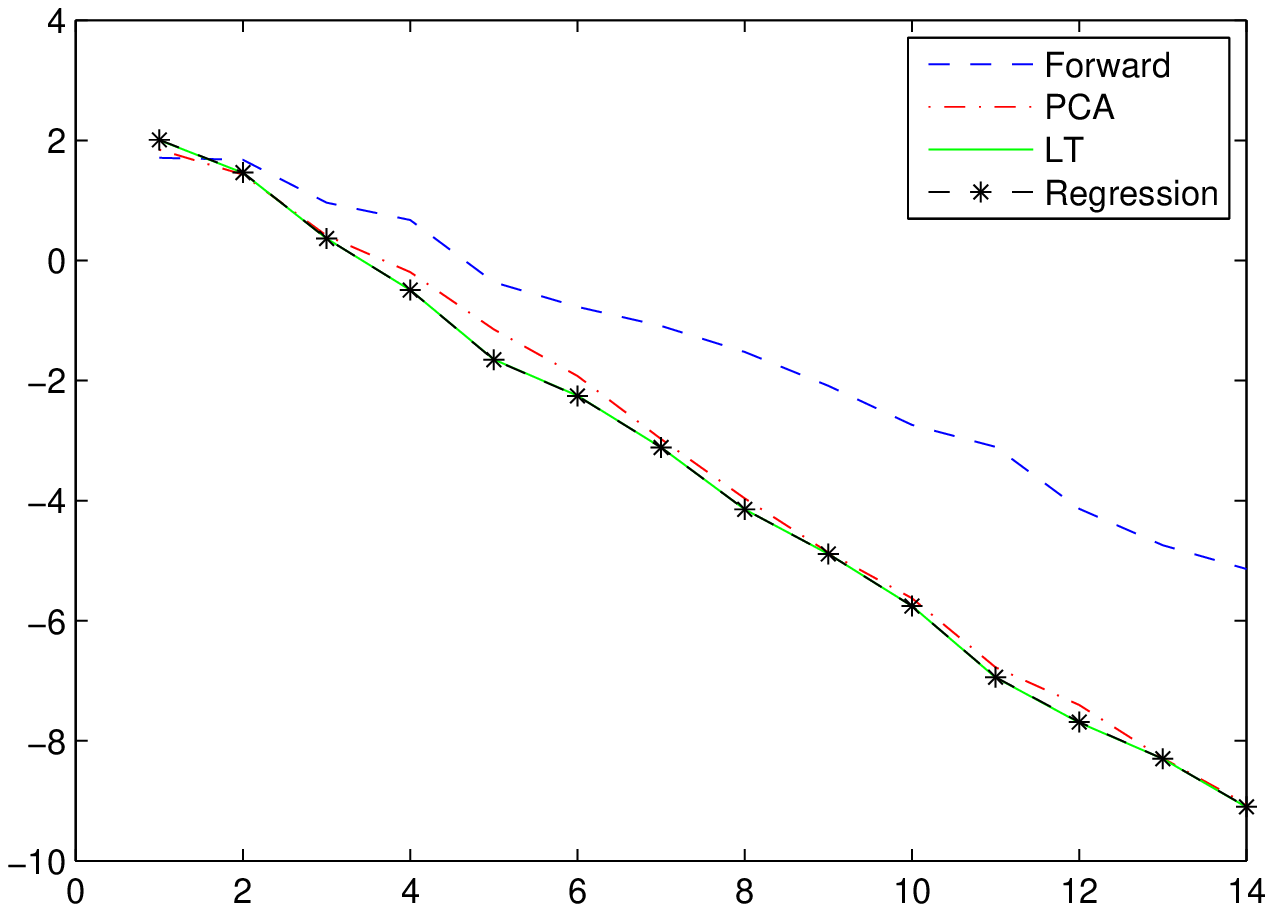}
\end{minipage}
\begin{minipage}[b]{7.9 cm}
\includegraphics[width=7.9cm,height=5.5cm]{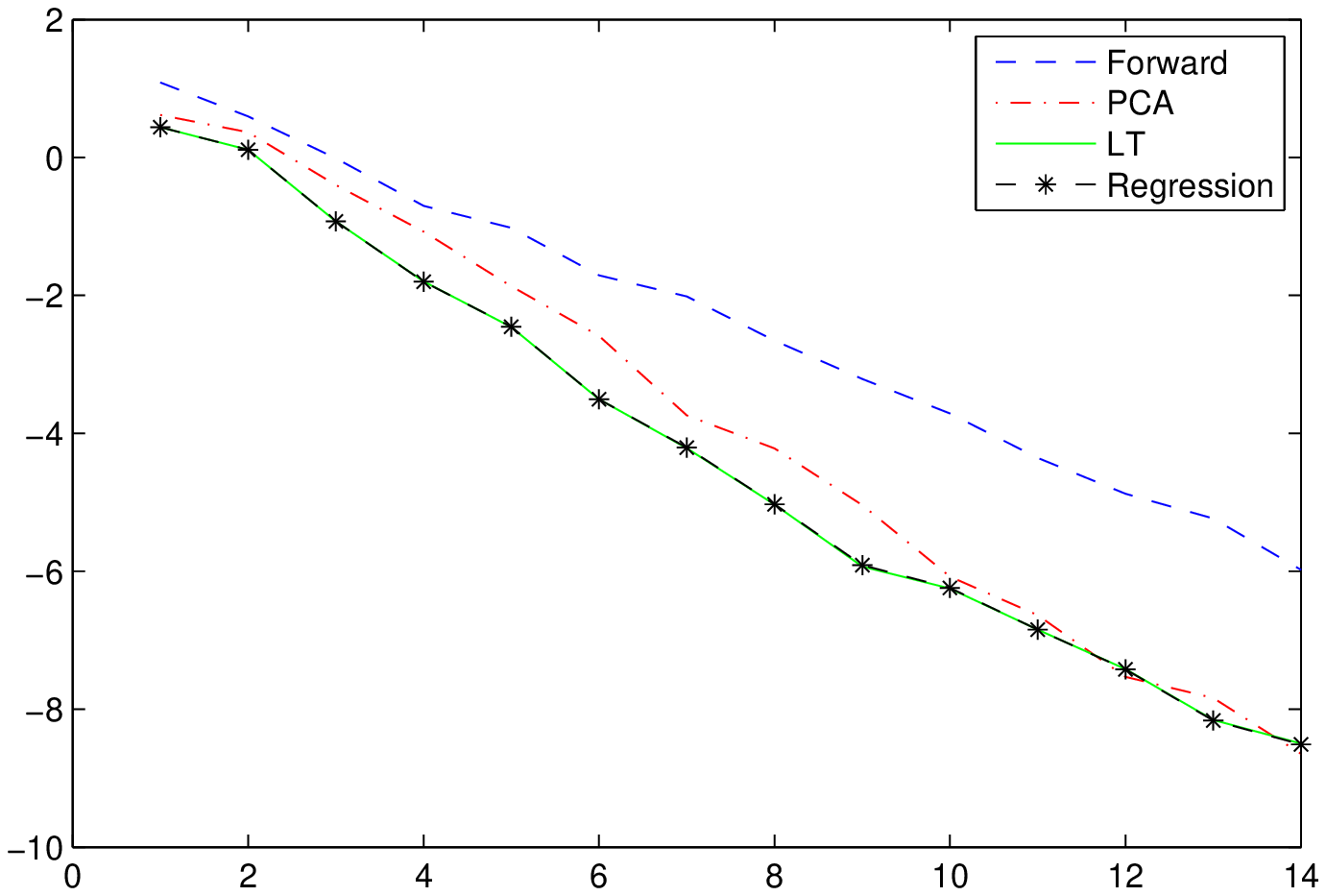}
\end{minipage}
\end{center}
\caption{Asian option (left) and Asian Basket option (right): Standard deviation of $32$ runs on a $\log_2$-scale}
\label{fig:asian}
\end{figure}

The computation time required to price the Asian option using quasi-Monte Carlo integration with $2^{14}$ paths is given in Table \ref{tbl:comptime}. Note that PCA is implemented using the discrete sine transform as discussed in Leobacher \cite{leo11}. The LT method is implemented such that only the first $25$ columns are optimized and then the orthogonal transform is completed using Householder reflections as we suggested in Section \ref{sec:householder}.

\begin{table}[ht]
\begin{center}
\begin{tabular}{c{|}cccc}
\hline
									&\text{Forward}	&\text{PCA}	&\text{LT}	&Regression\\\hline
\text{time (sec)}	&0.08						&0.64				&1.94		 		&0.15 \\
\hline
\end{tabular}
\caption{Computation times for pricing the Asian option}\label{tbl:comptime}
\end{center}
\end{table}

Furthermore it should be mentioned that the regression method as well as the LT method produce an overhead caused by determining the orthogonal transform. Nevertheless the overhead time is rather small and is negligible for a large sample size. 

The computation times of the subsequent numerical examples are similar to the result regarding the Asian option.

\subsection*{Asian basket option}

We consider an Asian basket call option with arithmetic average and a basket consisting of $m$ assets, an example taken from Imai \& Tan \cite{imaitan07}. The $i$-th asset $S^{(i)}$ of the basket ($i=1,\dots,m$) is given by 
\begin{align*}
S_{k\frac{T}{n}}^{(i)}=S_{0}^{(i)}\exp\left(\left(r-\frac{\sigma_{i}^2}{2}\right)k\frac{T}{n}+\sigma_{i}B^{(i)}_{k\frac{T}{n}}\right)
\end{align*}
where $S_{0}^{(i)}$ is the current price of the $i$-th asset, $r$ is the risk-free interest rate, $\sigma_{i}$ is the volatility of the $i$-th asset and $B=(B^{(1)},\dots,B^{(m)})$ is an $m$-dimensional Brownian motion. The correlation between $B^{(j)}$ and $B^{(k)}$ is denoted by $\rho_{jk}$. The payoff function of the Asian basket option is given by
\begin{align*}
f(X)=\max\left(\frac{1}{nm}\sum_{i=1}^{m}\sum_{k=1}^{n}S^{(i)}_{k\frac{T}{n}}(X)-K,0\right),
\end{align*}
where
\begin{align*}
S^{(i)}_{k\frac{T}{n}}(X)=S_{0}^{(i)}\exp\left(\sum_{j=1}^{nm}C_{(k-1)m+i,j}X_j+(r-\frac{\sigma_i^2}{2})k\frac{T}{n}\right),
\end{align*}
and where $C$ is an $mn\times mn-$matrix with $CC^{\top}=\tilde\Sigma:=R\otimes\Sigma$ and $R$ is an $m\times m-$matrix with $R_{ii}=\sqrt{T/n}\,\sigma_{i}^2$ for all $i$ and $R_{ij}=\sqrt{T/n}\,\rho_{ij}\sigma_i\sigma_j$ for $i\neq j$. Note that the discussion of the previous sections also holds for a discrete Brownian path with covariance matrix $\tilde\Sigma$. Since the problem is of the form $f(X)=g(h(X))$, Algorithm \ref{alg:main} can be applied. Since the function $h$ is of the form considered in Example \ref{ex:sum_prod}, we can compute the corresponding vector $a$ analytically. Furthermore, notice that the PCA construction can be computed in this example efficiently by using the orthogonal transform $V_1D_1\otimes V_2D_2$ where $V_1D_1^2V_1^{\top}=R$ and $V_2D_2^2V_2^{\top}=\Sigma$.

The parameters are $T=1, r=0.04, K=100$ and $\rho_{jk}=0.05$ for $j\neq k$. Moreover, the volatility of the $10$ assets is equally spaced from $0.1$ to $0.3$ and we assume that $S_{0}^{(i)}=100$ for all $i=1,\dots,m$. Since we simulate every asset at $250$ time points, we take a Sobol sequence in dimension $n=2500$ with a random shift. In Figure \ref{fig:asian} we can observe the standard deviation based on $32$ batches of the forward method, the PCA construction, the LT method and the regression method for $N$ sample paths with $N$ up to $2^{14}$.

\subsection*{Digital barrier option}

A digital (up-and-in) barrier option is a derivative which pays $1$ if the underlying asset breaks through a barrier $u$ on the time interval $[0,T]$ and pays $0$ otherwise. We intend to price the option in a discrete Black-Scholes model, where the path of the stock is given by $S=(S_1,\dots, S_n)$ with
\begin{align}\label{eq:discretestock}
S_k(X)=S_0\exp\left(\left(r-\frac{\sigma^2}{2}\right)k\frac{T}{n}+\sigma B_{k\frac{T}{n}}\right)
\end{align}
with current stock price $S_0$, interest rate $r$, volatility $\sigma$, Brownian path $B=(B_{k\frac{T}{n}})_{k=1}^{n}$ where $B_{k\frac{T}{n}}=\sqrt{\frac{T}{n}}\sum_{j=1}^{k}X_{j}$ and standard normal vector $X=(X_1,\dots,X_n)$. Hence, the payoff function $h$ of the digital barrier option is 
\begin{align*}
h(X)=1_{\max_{k=1,\dots,n} S_{k}(X)\geq u}
\end{align*}
which leads us to an integration problem of the form $\E(\exp(-rT)h(B))$. We can use Algorithm \ref{alg:main} for solving this problem and therefore, we have to compute $a_{i}=\E(h(X)X_i)$ for $i=1,\dots,n$. In the appendix we show how to calculate this expectation for a function depending on the maximum of a Brownian motion with drift $\nu$. We can adjust our problem by 
\begin{align*}
\max_{k=1,\dots,n} S_{k}\geq u&\Longleftrightarrow\max_{k=1,\dots,n} S_0\exp\left(\left(r-\frac{\sigma^2}{2}\right)k\frac{T}{n}+\sigma B_{k\frac{T}{n}}\right)\geq u\\
&\Longleftrightarrow\max_{k=1,\dots,n} \frac{(r-\frac{\sigma^2}{2})}{\sigma}k\frac{T}{n}+B_{k\frac{T}{n}}\geq \frac{\log(u/S_0)}{\sigma}\\
&\Longleftrightarrow\max_{k=1,\dots,n} B_{k\frac{T}{n}}^{\nu}\geq \tilde{u}
\end{align*}
with $B^\nu_t=\nu t+B_{t}$, $\nu=\frac{(r-\frac{\sigma^2}{2})}{\sigma}$ and $\tilde{u}=\frac{\log(u/S_0)}{\sigma}$. With $(\ref{eq:barrier})$ we get that the vector $a$ in Algorithm \ref{alg:main} can be approximated by
\begin{align*}
a\approx S^{-1}\beta-\nu\sqrt{T/n}\,\gamma\,\mathbf{1}
\end{align*}
where $S$ is given by (\ref{eq:summation-matrix}), $\beta=(\beta_1,\dots,\beta_n)^{\top}$ with $\beta_{i}=\E(1_{\max_{0\leq s\leq T}B_{s}^{\nu}\geq u}B^\nu_{i\frac{T}{n}})$, $\gamma=\E(1_{\max_{0\leq s\leq T}B_{s}^{\nu}\geq u})$ and $\mathbf{1}=(1,\dots,1)^{\top}$. The computation of $\beta_i$ with $i=1,\dots,n$ can be reduced to a $1$-dimensional integration problem using (\ref{eq:refl3}) with $f=\mathrm{id}_{\R}$ and $t=i\frac{T}{n}$ and formula (\ref{eq:refl1}) with $f\equiv1$ simplifies $\gamma$. Consequently, we end up with $1$-dimensional integrals which can be evaluated efficiently with an adaptive quadrature rule.

For the numerical test we use a Sobol sequence of dimension $n=2000$ with a random shift and the parameter set is chosen as $S_0=100, u=110, r=0.04, \sigma=0.2$ and $T=1$. The number of sample paths $N$ ranges from $2^1$ to $2^{14}$ and we compute the standard deviation for those $N$ based on $32$ batches. Since it is not clear how to apply the LT method of Imai and Tan to barrier options, we compare the regression method with the forward method and the PCA construction only. In Figure \ref{fig:barrier} we can observe that the difference between the forward method and the PCA is smaller than in the previous examples. Furthermore, we see that the regression method is slightly behind the PCA, but this seems to be the best we can achieve by linear approximation.

\begin{figure}[ht]
\begin{center}
\begin{minipage}[b]{7.9 cm}
\includegraphics[width=7.9cm,height=5.5cm]{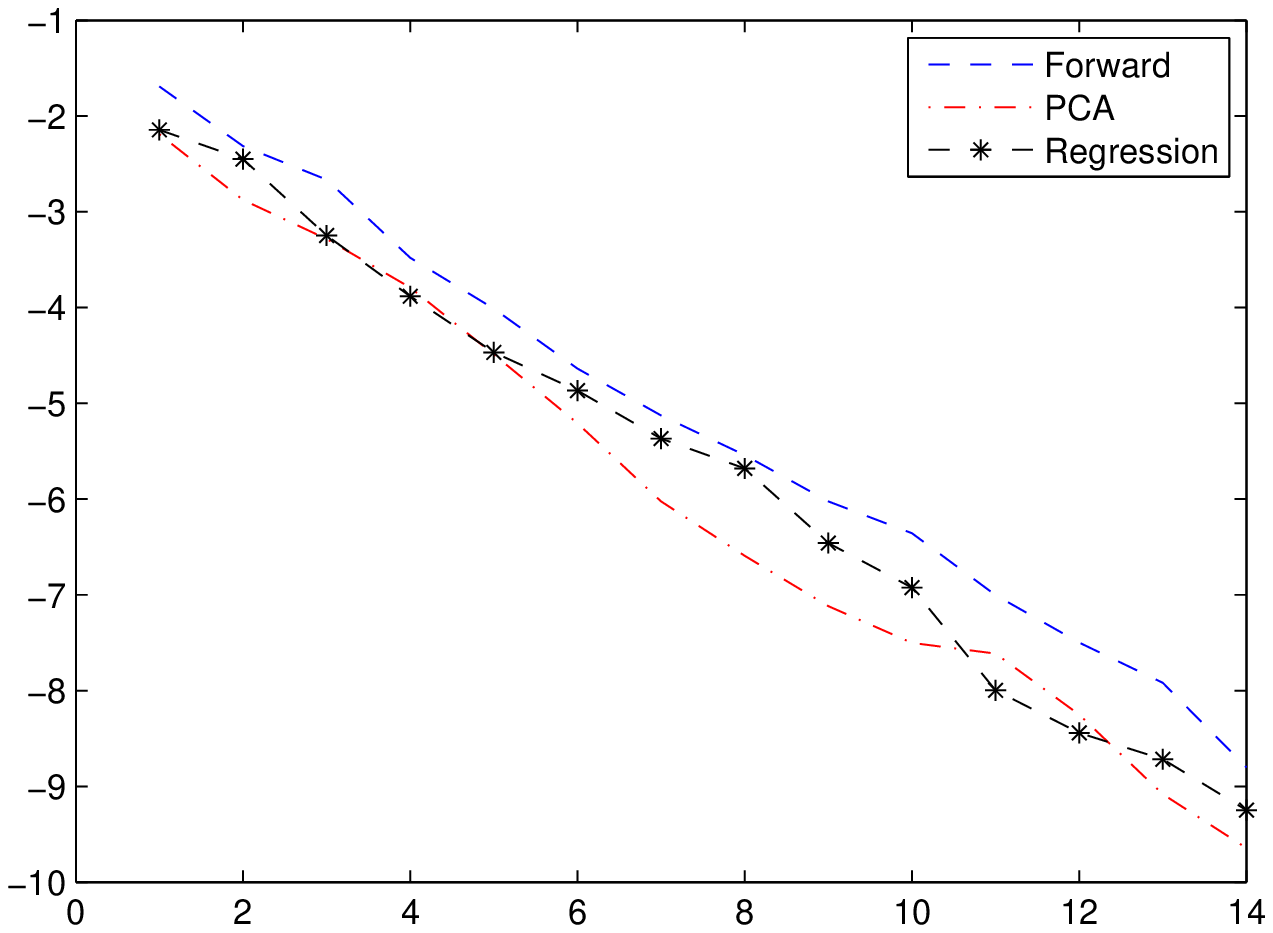}
\end{minipage}
\begin{minipage}[b]{7.9 cm}
\includegraphics[width=7.9cm,height=5.5cm]{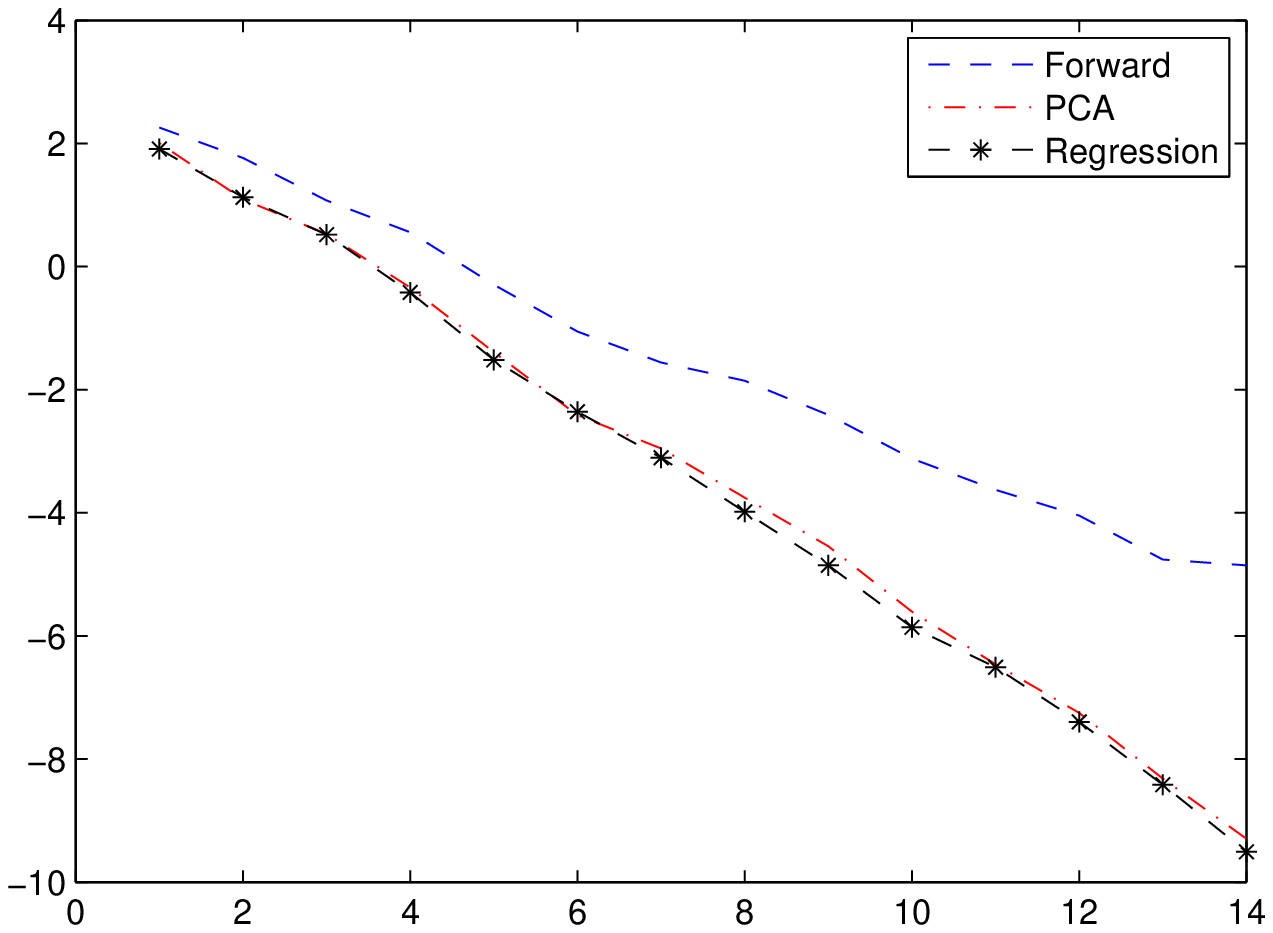}
\end{minipage}
\end{center}
\caption{Digital barrier option (left) and Asian barrier option (right): Standard deviation of $32$ runs on a $\log_2$-scale}
\label{fig:barrier}
\end{figure}

\subsection*{Asian barrier option}

The last example we provide is an Asian (up-and-in) barrier option by which we mean that the payoff of the option is similar to an Asian option as in the first numerical example, but is paid only if the underlying asset breaks through an upper barrier $u$. The corresponding function is then given by
\begin{align*}
f(X)=\exp(-rT)1_{\max_{k=1,\dots,n} S_{k}(X)\geq u}\max\left(\sum_{k=1}^{n}\frac{1}{n}S_{k}(X)-K,0\right)
\end{align*}
where $S_{k}(X)$ is as in (\ref{eq:discretestock}) for $k=1,\dots,n$. Since the function $f$ is of the form $f(X)=g(h_1(X),h_2(X))$ with $g(x,y)=\exp(-rT)xy$, $h_1(X)=1_{\max_{k=1,\dots,n} S_{k}(X)\geq u}$ and $h_2(X)=\max(\sum_{k=1}^{n}\frac{1}{n}S_{k}(X)-K,0)$, we apply Algorithm \ref{alg:general} with $m=2$ to the problem. The computation of the vectors $a^{(1)}$ and $a^{(2)}$ is already discussed in the examples above, i.e.\ $a^{(1)}$ is related to the digital barrier option and $a^{(2)}$
corresponds to the Asian option.

The numerical test is based on $32$ batches and we again compare the standard deviation of the forward method, the PCA construction and the regression method for various numbers of sample paths $N$, ranging from $2^1$ to $2^{14}$. Moreover, we use a Sobol sequence in dimension $n=1000$ with a random shift and the parameters are $S_0=100, K=100, u=110, r=0.04, \sigma=0.2$ and $T=1$. Figure \ref{fig:barrier} shows that the regression method yields slightly better results than the PCA and that the forward method is behind the other two approaches.

\section*{Appendix: Regression for the maximum}
\addcontentsline{toc}{section}{Appendix: Regression for the maximum}

We give the computations needed for examples of barrier type, that is we want to compute $\E(h(X)X_i)$ where $h$ is some function of the maximum of a discrete Brownian path with drift $\nu$, i.e.
\[
h(X)=\tilde{h}\left(\max_k \left(B_\frac{kT}{n}+\nu \frac{kT}{n}\right)\right)\,,
\]
and where $B_\frac{kT}{n}=\sqrt{\frac{T}{n}}\sum_{j=1}^k X_j$. We make the approximation
\begin{align}
\nonumber\E\left(\tilde{h}\left(\max_k \left(B_\frac{kT}{n}+\nu \frac{kT}{n}\right)\right)X_i\right)&\approx\E\left(\tilde{h}\left(\max_{0\le s\le T} (B_s+\nu s)\right)
\sqrt{n}\left(B_\frac{iT}{n}-B_\frac{(i-1)T}{n}\right)\right)\\
\label{eq:barrier}&=\E\left(\tilde{h}\left(\max_{0\le s\le T} B^\nu_s\right)
\sqrt{n}\left(B^\nu_\frac{iT}{n}-B^\nu_\frac{(i-1)T}{n}-\nu\frac{T}{n}\right)\right)\,,
\end{align}
where $B^\nu$ denotes Brownian motion with drift $\nu\in \R$, i.e.\ $B^\nu_t:=B_t+\nu t$, $t\ge 0$. Moreover, let $M^{\nu}_{t,T}:=\max_{t\leq s\leq T}B^{\nu}_{s}$ and $M^\nu_t:=M^\nu_{0,t}$. At first we compute $\E(1_{M^\nu_T\ge u}f(B^\nu_t))$ for given $u>0$ and measurable $f$ with $\E(|f(B^\nu_t)|)<\infty$. Then we show how the expectation for more general $\tilde h$ can be computed using the first result.

We start with a simple calculation for a Brownian motion $B$ with drift 0 and let $M_t:=M^0_t$. For $u\ge 0$ we get, using the reflection principle for Brownian motion,
\begin{align*}
\E(1_{M_t\ge u}f(B_t))
&=\E(1_{M_t\ge u}1_{B_t\ge u}f(B_t))+\E(1_{M_t\ge u}1_{B_t< u}f(B_t))\\
&=\E(1_{B_t\ge u}f(B_t))+\E(1_{B_t\ge u}f(2u -B_t))\,.
\end{align*}
Next we make a Girsanov-type change of measure such that under the new measure $Q$ the Brownian motion $B^\nu$ with drift becomes a standard Brownian motion. So with $\frac{dQ}{dP}=e^{-\nu B_t-\frac{\nu^2}{2}t}$, that is $\frac{dP}{dQ}=e^{\nu B^\nu_t-\frac{\nu^2}{2}t},$
\begin{align}
\nonumber\E\left(1_{M^\nu_t\ge u}f(B^\nu_t)\right)
&=\E_Q\bigl(1_{M^\nu_t\ge u}f(B^\nu_t)e^{\nu B^\nu_t-\frac{\nu^2}{2} t}\bigr)\\
\nonumber&=\E_Q\bigl(1_{B^\nu_t\ge u}f(B^\nu_t)e^{\nu B^\nu_t-\frac{\nu^2}{2}t}\bigr)+\E_Q\bigl(1_{B^\nu_t\ge u}f(2u-B^\nu_t)e^{\nu (2u -B^\nu_t)-\frac{\nu^2}{2}t}\bigr)\\
\nonumber&=\E\bigl(1_{B^\nu_t\ge u}f(B^\nu_t)\bigr)
+\E_Q\bigl(1_{-B^\nu_t\ge u}f(2u+B^\nu_t)e^{\nu (2u +B^\nu_t)-\frac{\nu^2}{2}t}\bigr)\\
\label{eq:refl1}&=\E\bigl(1_{B^\nu_t\ge u}f(B^\nu_t)\bigr)
+e^{2u\nu}\E\bigl(1_{B^\nu_t\le -u}f(2u+B^\nu_t)\bigr)\,.
\end{align}
The next step is to consider $\E(1_{M^\nu_T\ge u}f(B^\nu_t))$ for $t<T$. Let $\{{\cal F}_t\}_{0\le t\le T}$ denote the standard filtration of $B$.
\begin{align}
\nonumber\E\bigl(1_{M^\nu_T\ge u}f(B^\nu_t)\bigl)
&=\E\bigl(\E(1_{M^\nu_T\ge u}f(B^\nu_t)|{\cal F}_t)\bigr)
=\E\bigl(f(B^\nu_t)\E(1_{M^\nu_T\ge u}|{\cal F}_t)\bigr)\\
\nonumber&=\E\bigl(f(B^\nu_t)\E(1_{M^\nu_t\ge u}+1_{M^\nu_t< u}1_{M^\nu_{t,T}\ge u}|{\cal F}_t)\bigr)\\
\label{eq:refl2}&=\E\bigl(f(B^\nu_t)1_{M^\nu_t\ge u}\bigr)+\E\bigl(1_{M^\nu_t< u}f(B^\nu_t)\E(1_{M^\nu_{t,T}\ge u}|{\cal F}_t)\bigr)\,.
\end{align}
We have already computed the first term. For the second term we note that by the Markov property of Brownian motion,
\[
\E(1_{M^\nu_{t,T}\ge u}|{\cal F}_t)
=\E(1_{M^\nu_{t,T}\ge u}|B^\nu_t)
=\E(1_{\max_{t\le s\le T}(B^\nu_s-B^\nu_t)\ge (u-B^\nu_t)}|B^\nu_t)\,.
\]
We can use our earlier result (\ref{eq:refl1}) with $f(x)\equiv 1$ to obtain
\[
\E(1_{\max_{t\le s\le T}(B^\nu_s-B^\nu_t)\ge (u-B^\nu_t)}|B^\nu_t)
=\Phi\Bigl(\frac{B^\nu_t-u-\nu(T-t)}{\sqrt{T-t}}\Bigr)(1+e^{2u\nu})\,.
\]
Let us write $g(u,x):=\Phi\bigl(\frac{x-u-\nu(T-t)}{\sqrt{T-t}}\bigr)(1+e^{2u\nu})$. Then, using (\ref{eq:refl1}) and (\ref{eq:refl2}) we obtain
\begin{align}
\nonumber\E(1_{M^\nu_T\ge u}f(B^\nu_t))=&\E(f(B^\nu_t)1_{M^\nu_t\ge u})+\E(1_{M^\nu_t< u}f(B^\nu_t)g(u,B^\nu_t))\\
\nonumber=&\E(f(B^\nu_t)g(u,B^\nu_t))+\E(f(B^\nu_t)1_{M^\nu_t\ge u})-\E(1_{M^\nu_t\ge u}f(B^\nu_t)g(u,B^\nu_t))\\
\nonumber=&\E(f(B^\nu_t)g(u,B^\nu_t))+\E\left(1_{M^\nu_t\ge u}f(B^\nu_t)(1-g(u,B^\nu_t))\right)\\
\nonumber=&\E(f(B^\nu_t)g(u,B^\nu_t))+\E\left(1_{B^\nu_t\ge u}f(B^\nu_t)(1-g(u,B^\nu_t))\right)\\
&+e^{2u\nu}\E\left(1_{B^\nu_t\le -u}f(2u+B^\nu_t)(1-g(u,2u+B^\nu_t))\right)\,.\label{eq:refl3}
\end{align}
Note that the expectations can be computed explicitly for suitable $f$. 

We can also use \eqref{eq:refl3} to compute $\E(h(M^\nu_T)f(B^\nu_t))$ for $h$ differentiable and $h(0)=0$ and such that the expectations all converge absolutely:
\begin{align}
\nonumber\E(h(M^\nu_T)f(B^\nu_t))=&\E(\E(h(M^\nu_T)|B^\nu_t)f(B^\nu_t))\\
\nonumber=&\E\Bigl(\int_0^\infty h'(u)\E(1_{M^\nu_T\ge u}|B^\nu_t)d\!uf(B^\nu_t)\Bigr)\\
\nonumber=&\int_0^\infty h'(u)\E(\E(1_{M^\nu_T\ge u}|B^\nu_t)f(B^\nu_t))d\!u\\
\nonumber=&\int_0^\infty h'(u)\E(1_{M^\nu_T\ge u}f(B^\nu_t))d\!u\,.
\end{align}


\end{document}